\numberwithin{equation}{section}
\setlist[enumerate]{itemsep=0pt,label=$(\mathrm{\roman*})$, topsep=5pt}
\setlist[itemize]{itemsep=0pt, topsep=5pt, labelindent=\parindent, leftmargin=*}
\setlist[description]{itemsep=0pt, topsep=5pt, leftmargin=*}
\DeclareSymbolFont{cyrletters}{OT2}{wncyr}{m}{n}
\DeclareMathSymbol{\Sha}{\mathalpha}{cyrletters}{"58}
\newtheorem{thm}{Theorem}[section]
\newaliascnt{cor}{thm}
\newtheorem{cor}[cor]{Corollary}
\newaliascnt{lem}{thm}
\newtheorem{lem}[lem]{Lemma}
\newaliascnt{prop}{thm}
\newtheorem{prop}[prop]{Proposition}
\newaliascnt{conj}{thm}
\theoremstyle{definition}
\newaliascnt{dfn}{thm}
\newaliascnt{rem}{thm}
\newtheorem{rem}[rem]{Remark}
\newaliascnt{prob}{thm}
\newaliascnt{ex}{thm}
\newtheorem{ex}[ex]{Example}
\newcommand{\ab}{\mathrm{ab}}
\DeclareMathOperator{\Aut}{Aut}
\newcommand{\bad}{\mathrm{bad}}
\newcommand{\C}{\mathbb{C}}
\newcommand{\Char}{\operatorname{char}}
\DeclareMathOperator{\Coker}{Coker}
\renewcommand{\div}{\mathrm{div}}
\newcommand{\dimFl}{\dim_{\Fl}}
\renewcommand{\d}{\partial}
\newcommand{\dbar}{\ol{\d}}
\newcommand{\dt}{\d^{\,\mathrm{tame}}}
\newcommand{\ds}{\displaystyle}
\newcommand{\E}{\mathscr{E}}
\newcommand{\Ebar}{\ol{E}}
\newcommand{\End}{\operatorname{End}}
\newcommand{\F}{\mathbb{F}}
\newcommand{\Fl}{\F_l}
\newcommand{\Frob}{\operatorname{Frob}}
\newcommand{\fin}{\mathrm{fin}}
\newcommand{\geo}{\mathrm{geo}}
\DeclareMathOperator{\Gal}{Gal}
\newcommand{\Gm}{\mathbb{G}_{m}}
\newcommand{\good}{\mathrm{good}}
\newcommand{\isomto}{\xrightarrow{\simeq}}
\renewcommand{\Im}{\operatorname{Im}}
\newcommand{\inj}{\hookrightarrow}
\DeclareMathOperator{\Jac}{Jac}
\newcommand{\Jbar}{\ol{J}}
\newcommand{\J}{\mathscr{J}}
\DeclareMathOperator{\Ker}{Ker}
\newcommand{\loc}{\operatorname{loc}}
\newcommand{\locbar}{\ol{\loc}}
\newcommand{\m}{\mathfrak{m}}
\newcommand{\M}{\mathscr{M}}
\newcommand{\N}{\mathscr{N}}
\newcommand{\ol}[1]{\overline{#1}}
\renewcommand{\O}{\mathcal{O}}
\newcommand{\otimesM}{\overset{M}{\otimes}}
\newcommand{\Pbar}{\overline{P}}
\newcommand{\piab}{\pi_1^{\ab}}
\newcommand{\piabX}{\piab(X)}
\newcommand{\piabXgeo}{\piabX^{\geo}}
\newcommand{\piabXgeoram}{\piabXgeo_{\ram}}
\newcommand{\plim}{\varprojlim}
\newcommand{\p}{\mathfrak{p}}
\newcommand{\pmat}[1]{\begin{pmatrix}
	#1
\end{pmatrix}}
\newcommand{\Q}{\mathbb{Q}}
\newcommand{\Qp}{\Q_p}
\newcommand{\QZ}{\Q/\Z}
\newcommand{\R}{\mathbb{R}}
\newcommand{\Res}{\operatorname{Res}}
\newcommand{\res}{\operatorname{res}}
\newcommand{\ram}{\mathrm{ram}}
\newcommand{\red}{\mathrm{red}}
\newcommand{\rhobar}{\ol{\rho}}
\newcommand{\sep}{\mathrm{sep}} 
\newcommand{\ssm}{\smallsetminus}
\newcommand{\Spec}{\operatorname{Spec}}
\newcommand{\tor}{\mathrm{tor}}
\newcommand{\Sp}{\operatorname{Sp}}
\newcommand{\ubar}{\ol{u}}
\newcommand{\VX}{V(X)}
\newcommand{\X}{\mathscr{X}}
\newcommand{\Xbar}{\ol{X}}
\newcommand{\Z}{\mathbb{Z}}
\title[A Hasse principle for higher Chow groups of curves]{A Hasse principle for higher Chow groups of curves over global fields}
\author[T. Hiranouchi]{Toshiro Hiranouchi}\address[T. Hiranouchi]{
Department of Basic Sciences, Graduate School of Engineering, 
Kyushu Institute of Technology, 
1-1 Sensui-cho, Tobata-ku, Kitakyushu-shi, 
Fukuoka 804-8550 JAPAN}
\email{hira@mns.kyutech.ac.jp}
\subjclass[2020]{Primary 14C15; Secondary 11G05, 19D45}
\keywords{Higher Chow groups; curves over global fields; Jacobians; boundary maps; mod-$l$ Galois representations}
\date{}
\begin{document}
\pagenumbering{arabic}
\begin{abstract}
Let $X$ be a smooth projective curve over a global field $F$, and let
$V(X)$ denote the kernel of the push-forward map
$CH^2(X,1)\to F^\times$. We study the mod-$l$ structure of $V(X)$ by
combining Bloch's exact sequence with a Hasse principle in Galois
cohomology associated with the mod-$l$ representation of the Jacobian
$J$ of $X$. We obtain an exact sequence that describes the kernel and
cokernel of the boundary map in terms of local reduction data and the
coinvariant quotient $J[l]_{G_F}$. As a consequence, if
$\End_{\overline F}(J)=\Z$ and $J$ has semistable reduction of toric
dimension one at some place of $F$, then the mod-$l$ boundary map is an
isomorphism for all but finitely many primes $l\neq\mathrm{char}(F)$. We also
give explicit computations for elliptic curves.
\end{abstract}
\maketitle

\section{Introduction}
Let $F$ be a global field, and let $X$ be a smooth projective curve over
$F$ with $X(F)\neq\emptyset$. In this paper, we study the group
\[
V(X):=\Ker\bigl(f_*\colon CH^2(X,1)\to CH^1(F,1)=F^\times\bigr),
\]
where $f\colon X\to\Spec(F)$ is the structure morphism. A rational point
of $X$ splits $f_*$, and hence
\[
CH^2(X,1)\simeq F^\times\oplus V(X).
\]
Bloch conjectured that $V(X)$ is torsion; see
\cite[Remark~1.24]{Blo81}, \cite{Ras90}, and
\cite{Akh05}. More generally,
the known structure results suggest that the nondivisible part of
$V(X)$ should be governed by arithmetic invariants of $X$ and its
Jacobian.
After fixing an $F$-rational point $x_0\in X(F)$, there is a canonical isomorphism
\[
V(X)\simeq K(F;J,\Gm),
\]
where $J=\Jac_X$ and the right-hand side is the Somekawa $K$-group
attached to $J$ and $\Gm$; see \cite[Theorem~2.1]{Som90} and \cite{RS00}. 
We use this description to
relate $V(X)/l$ to the Galois symbol map and to the mod-$l$ Galois
representation on $J[l]$.
The boundary map studied here is an analogue of the tame-symbol map for $K_2^M(F)$. Over a number field, the latter is surjective and has the finite kernel $K_2(\O_F)$; see \cite[Theorem~7]{Wei05}.
For a finite place $v$ at which $X$ has good reduction, let
$\Jbar_v$ denote the reduction of $J$. The local boundary maps define
\[
\dbar_{X,l}^{\good}\colon V(X)/l\to
\bigoplus_{v\in\Sigma_\good(X)}\Jbar_v(\F_v)/l.
\]
Our aim is to describe the kernel and cokernel of this map in terms of
local reduction data and the $G_F$-coinvariant quotient $J[l]_{G_F}$.
Let
\[
\rhobar_{J,l}\colon G_F\to\Aut(J[l])
\]
be the mod-$l$ Galois representation induced by the natural action of $G_F$ on $J[l]$, and put
$G_l:=\Im(\rhobar_{J,l})$.
For a finite $G_l$-module $M$, write
\[
\Sha^1(G_l,M):=
\Ker\left(H^1(G_l,M)\to\prod_{v\in P(F)}H^1((G_l)_v,M)\right),
\]
where $(G_l)_v$ is the image of a decomposition group at $v$. Define
\[
V(X,l):=
\begin{cases}
\displaystyle
\bigoplus_{\substack{v\in\Sigma_\good(X)\\ v\mid l}}
\Ker(\dbar_{X_v,l})
\oplus
\bigoplus_{v\in\Sigma_\bad(X)}V(X_v)/l,
& \Char(F)=0,\\[4mm]
\displaystyle
\bigoplus_{v\in\Sigma_\bad(X)}V(X_v)/l,
& \Char(F)>0.
\end{cases}
\]
The following is the main structural result of the paper.
\begin{thm}[{\autoref{thm:pdiv} and \autoref{thm:main}}]
\label{thm:main_intro}
Let $l$ be an odd prime with $l\neq\Char(F)$. If
$\Sha^1(G_l,J[l]^\vee)=0$,
then the localization map
\[
V(X)/l\to\bigoplus_{v\in P(F)}V(X_v)/l
\]
is injective, and there is an exact sequence
\[
0\to\Ker(\dbar_{X,l}^{\good})
\to V(X,l)
\to J[l]_{G_F}
\to\Coker(\dbar_{X,l}^{\good})
\to0.
\]
\end{thm}
The cohomological hypothesis has a simple sufficient condition: it
holds when the image of $\rhobar_{J,l}$ is trivial or contains a
nontrivial scalar matrix. Combining this criterion with big-monodromy
theorems gives the following consequence for curves of arbitrary genus.
\begin{thm}[{\autoref{cor:almostall}}]
\label{thm:main_intro2}
Assume that
$\End_{\overline F}(J)=\Z$
and that $J$ has semistable reduction of toric dimension one at some finite
place of $F$. Then
\[
\dbar_{X,l}^{\good}\colon V(X)/l\to
\bigoplus_{v\in\Sigma_\good(X)}\Jbar_v(\F_v)/l
\]
is an isomorphism for all but finitely many primes
$l\neq\Char(F)$.
\end{thm}
For elliptic curves, the cohomological condition in \autoref{thm:main_intro} is automatic by \autoref{prop:Sha}, so the exact sequence of \autoref{thm:main_intro} holds without any assumption on the image of the mod-$l$ representation.
Our earlier paper \cite[Theorem~1.1]{Hir25} studied $V(E)/l$ for elliptic curves
over number fields, with applications to several exceptional primes
$l$. In the present paper, the elliptic-curve case is treated over
arbitrary global fields for any odd prime $l\neq \Char(F)$. 
%We also
%compute the local terms for split and non-split multiplicative reduction
%and give examples over number fields and global function fields.

The paper is organized as follows. Section~2 recalls Mackey products,
Somekawa $K$-groups, and the description of $V(X)$ in terms of the
abelian fundamental group. Section~3 studies local boundary maps for
curves. Section~4 establishes the global Hasse principle and the main
exact sequence, and derives the almost-all-$l$ theorem. Section~5 gives
the local computations for elliptic curves with multiplicative
reduction. Section~6 contains explicit examples.

\subsection*{Notation}
We use standard notation for local and global fields. For a field $F$, write $\Char(F)$ for its characteristic and $G_F=\Gal(F^{\sep}/F)$. For a Galois extension $L/F$ and a $\Gal(L/F)$-module $M$, let $H^i(L/F,M)$ denote continuous Galois cohomology, and write $H^i(F,M)$ when $L=F^{\sep}$. For a global field $F$, let $P(F)$ be its set of places, $F_v$ the completion at $v$, and $\F_v$ the residue field when $v$ is finite. For a local field $K$, write $v_K$, $\O_K$, $\m_K$, and $\F_K$ for its normalized valuation, valuation ring, maximal ideal, and residue field. For an abelian group $A$, write $A[m]$, $A/m$, $A_\div$, and $A_\red=A/A_\div$ in the usual sense; for an extension $L/F$, put $X_L=X\otimes_F L$.

\subsection*{Acknowledgements}
The author was supported by JSPS KAKENHI Grant Number 24K06672.
\section{Preliminary}

\subsection{Mackey products}
We use Mackey products and Somekawa $K$-groups in the sense of
\cite[Section~1]{Kah92a} and \cite[Section~1]{Som90}. If $\M$ and $\N$ are Mackey functors over $F$, the Mackey product $\M\otimesM\N$ is generated by symbols $\{x,y\}_{L/F}$, for finite extensions $L/F$, subject to the projection formulas
\[
\{N_{L/K}x,y\}_{K/F}=\{x,\Res_{L/K}y\}_{L/F},\qquad
\{x,N_{L/K}y\}_{K/F}=\{\Res_{L/K}x,y\}_{L/F}.
\]
For a semiabelian variety $G$ and an integer $m$ prime to $\Char(F)$, the Kummer sequence gives an injection
\begin{equation}\label{def:Kummer}
\delta_G\colon G(L)/m\inj H^1(L,G[m]).
\end{equation}
For semiabelian varieties $G_1,G_2$, cup product and corestriction define the Galois symbol
\begin{equation}\label{def:s}
s_m\colon K(F;G_1,G_2)/m\to H^2(F,G_1[m]\otimes G_2[m]),
\end{equation}
characterized by
\[
s_m(\{x_1,x_2\}_{L/F})=
\mathrm{Cor}_{L/F}\bigl(\delta_{G_1}(x_1)\cup\delta_{G_2}(x_2)\bigr).
\]
Here $K(F;G_1,G_2)$ is the quotient of $(G_1\otimesM G_2)(F)$ by the Weil reciprocity relations of \cite[Section~1]{Som90}.

\subsection{Abelian fundamental groups for curves}
Let $X$ be a  smooth projective curve over a field $F$. 
We denote by $X_0$ the set of closed points of $X$. 
The group $SK_1(X)$ is defined by the cokernel of the tame symbol map 
\[
SK_1(X) = \Coker\left(\dt_{F(X)} \colon K_2^M(F(X)) \to \bigoplus_{x\in X_0} F(x)^{\times}\right), 
\]
where  
$\dt_{F(X)}$ is the direct sum of the tame symbols at the closed points of $X$, 
$F(x)$ is the residue field at $x\in X_0$, and $F(X)$ is the function
field of $X$. 
The norm maps $N_{F(x)/F}\colon F(x)^{\times} \to F^{\times}$ 
for closed points $x \in X_0$ 
induce $N\colon SK_1(X) \to F^{\times}$; 
set $\VX := \Ker(N)$. 
Since $X(F)\neq \emptyset$, 
we fix a rational point $x_0\in X(F)$. 
The point $x_0$ gives a section of $N$; hence the short exact sequence
\[
0\to V(X) \to SK_1(X)\xrightarrow{N} F^{\times} \to 0
\]
splits. 
The Milnor-type $K$-group $K(F;J,\Gm)$ 
associated with the Jacobian variety $J := \Jac_X$ of $X$ and the multiplicative group $\Gm$ 
is generated by symbols $\set{P,f}_{F'/F}$ 
of $P\in J(F')$ and $f\in \Gm(F') = (F')^{\times}$ 
for a finite field extension $F'/F$. 
It follows from \cite[Theorem~2.1]{Som90} that 
there is a canonical isomorphism 
\begin{equation}
\label{eq:Som}	
 V(X)\isomto K(F;J,\Gm).
\end{equation}
The point $x_0$ also gives a splitting of the exact sequence
\[
0 \to \piabXgeo\to \piabX\to G_F^{\ab}\to 0
\]
of abelian fundamental groups, where
$G_F^{\ab} = \Gal(F^{\ab}/F)$ is the Galois group of the maximal abelian extension $F^{\ab}$ of $F$, and 
$\piabXgeo$ denotes the kernel of $\piabX\to G_F^{\ab}$. 
The geometric part $\piabXgeo$ is naturally identified with the
$G_F$-coinvariant quotient $T(X)_{G_F}$, where 
\begin{equation}\label{eq:TX}
T(X) := H^1(X_{F^{\sep}},\QZ)^{\vee}.
\end{equation}
There is a decomposition 
$T(X) = \prod_{l\colon \text{prime}} T_l(X)$, where  
$T_l(X) := H^1(X_{F^{\sep}},\Q_l/\Z_l)^{\vee}$.
For a prime $l \neq \Char(F)$, 
$T_l(X) \simeq T_l(J)$
where $T_l(J) = \plim_n J[l^n](F^{\sep})$ 
is the $l$-adic Tate module of the Jacobian variety $J$
(cf.~\cite[Section~1]{KL81} and \cite[Section~3]{KS83b}).
For every positive integer $m$ prime to $\Char(F)$, 
the \textbf{Galois symbol map} 
\begin{equation}
	\label{eq:sFl}
	s_{X,m}\colon V(X)/m \simeq K(F;J,\Gm)/m \hookrightarrow H^2(F,J[m](1)) = H^2(F,J[m]\otimes \mu_m)
\end{equation}
is injective, where $\mu_m$ is the group of $m$-th roots of unity (\cite[Theorem~6.1]{Yam05}).

\section{Local boundary maps for curves}\label{sec:local}
Throughout this section, let $K$ be a local field with finite residue field $\F = \F_K$. 
Put $p := \Char(\F)$. 
Let $X$ be a smooth, projective, geometrically irreducible curve over
$K$ with $X(K)\neq\emptyset$. 
We use the following notation:
\begin{itemize}
\item $\X$: a proper regular model of $X$, that is,  a proper, regular, flat
scheme over $\O_K$ whose generic fiber is $X$ (cf.~\cite[Chapter~IV, Theorem~4.5]{Sil151}). 
\item $\Xbar:= \X\otimes_{\O_K}\F$:  the special fiber of $\X$.
\end{itemize}

\subsection{Class field theory}
Following \cite{Blo81}, \cite{Sai85a},
\cite{KS83b}, and \cite{Yos03}, let
\[
\tau_X\colon V(X)\to\piab(X)^{\geo}
\]
be the geometric part of the reciprocity map for $X$.
\begin{thm}[{\cite[Chapter II, Theorem~4.1, Theorem~5.1]{Sai85a},
\cite[Theorem~5.1]{Yos03}}]\label{thm:BS}
The kernel of $\tau_X$ is $V(X)_\div$, and its image is finite. Consequently,
\[
V(X)_\red\simeq\piab(X)^{\geo}_{\tor}
\]
is finite.
\end{thm}
Let $\X$ be a proper regular model of $X$ over $\O_K$, with special fiber $\Xbar$. The valuation maps at the closed points induce a surjective boundary map
\[
\d_X\colon SK_1(X)\twoheadrightarrow CH_0(\Xbar),
\]
and hence a map $\d_X\colon V(X)\to A_0(\Xbar)$. It is compatible with the specialization map on abelian fundamental groups. Since $A_0(\Xbar)\simeq\piab(\Xbar)^{\geo}_{\tor}$ by class field theory over finite fields, this compatibility gives an injection
\begin{equation}\label{eq:ker-ram}
\Ker\bigl((\d_X)_\red\colon V(X)_\red\to A_0(\Xbar)\bigr)
\hookrightarrow
\piabXgeoram,
\end{equation}
where
\[
\piabXgeoram:=\Ker\bigl(\piabXgeo\to\piab(\Xbar)^{\geo}\bigr).
\]
For the residue characteristic $p = \Char(\F)$, we denote by 
\[
(\piabXgeoram)^{(p')} \quad \mbox{and}\quad (\piabXgeoram)^{(p)}
\]
the prime-to-$p$ part and the $p$-primary part, respectively, of the
finite group $\piabXgeoram$.
Let $J = \Jac_X$ be the Jacobian variety of $X$. 
Let $\J^N$ be the N\'eron model of $J$, let $\Jbar^N$ be its special
fiber, and let $(\Jbar^N)^0$ be the identity component. 
The quotient   
\[
\Phi_J := \Jbar^N/(\Jbar^N)^0
\]
is a finite \'etale group over $\F$. 
\begin{thm}[{\cite[Theorem~3.2, Theorem~4.1]{Yos02}}]\label{thm:Yos}
	Put $p = \Char(\F)$.
	\begin{enumerate}
		\item There is an injection of finite abelian groups 
		\[
		{(\piabXgeoram)^{(p')}}^\vee \hookrightarrow\Phi_J(\ol{\F}),
		\]
		where $(-)^\vee$ denotes the Pontryagin dual. 
		\item If $\Char(K) = 0$, $X$ has semistable reduction, and the absolute ramification index satisfies $e_{K/\Qp} <p-1$, then 
		we have 
		\[
		(\piabXgeoram)^{(p)} = 0.
		\]
	\end{enumerate}
\end{thm}
Recall that $X$ has \textbf{good reduction} if the special fiber
$\Xbar$ is smooth over $\F$. We now assume that $X$ has good reduction. 
The boundary map
\begin{equation}\label{eq:locald}
	\d_X\colon V(X)\twoheadrightarrow A_0(\Xbar)\simeq\Jbar(\F),
\end{equation}
where $\Jbar$ is the reduction of $J$, can be described explicitly as
follows.
Fix a rational point $x_0\in X(K)$ and use the isomorphism $V(X)\simeq K(K;J,\Gm)$
given in \eqref{eq:Som}.
For a finite extension $L/K$, $P\in J(L)$ and $f\in L^{\times}$, 
the map $\d_{X}$ is given by 
\begin{equation}
\d_{X}(\set{P,f}_{L/K}) = v_{L}(f)N_{\F_L/\F}(\Pbar),  
\end{equation}
where $v_L$ is the valuation map of the local field $L$, 
$\Pbar$ denotes the image of $P$ under the reduction map $\red_L\colon J_L(L)\to \Jbar_L(\F_L)$, 
and $N_{\F_L/\F}\colon \Jbar_L(\F_L)\to \Jbar(\F)$ is the norm map. 
For any $\ol P\in\Jbar(\F)$, choose a lift $P\in J(K)$. If $\pi$ is a
uniformizer of $K$, then
$\d_X(\set{P,\pi}_{K/K})=\ol P$.
For every prime $l$, 
the boundary map $\d_X \colon V(X) \to A_0(\Xbar)$ induces 
a homomorphism
\begin{equation}\label{eq:dbarlocal}
\dbar_{X,l}\colon V(X)/l \to A_0(\Xbar)/l.
\end{equation}
The following lemma describes the kernel of $\dbar_{X,l}$.
\begin{lem}\label{lem:local0}
	Let $l$ be a prime. 
	Assume that one of the following conditions holds:  
	\begin{enumerate}[label=$(\alph*)$]
		\item $X$ has semistable reduction, $\Char(K) =0$, $l=p$, and $e_{K/\Qp}<p-1$. 
		\item $X$ has good reduction, and $l\neq p$.
	\end{enumerate}
	Then $\Ker(\dbar_{X,l}) =0$. 
\end{lem}
\begin{proof}
By \eqref{eq:ker-ram}, there is an injection
\[
\Ker((\d_X)_{\red})\hookrightarrow\piabXgeoram.
\]
Since $V(X)_\div$ is divisible and $A_0(\Xbar)$ is finite, $\d_X$
factors through $V(X)_\red$. Consequently, there is a natural
surjection
\[
\Ker((\d_X)_{\red})/l\twoheadrightarrow\Ker(\dbar_{X,l}).
\]
Because $\Ker((\d_X)_{\red})$ is finite, its quotient modulo $l$ is a
quotient of its $l$-primary part. Under hypothesis (a),
\autoref{thm:Yos}(ii) gives $(\piabXgeoram)^{(l)}=0$. Under hypothesis
(b), the Jacobian $J$ has good reduction, so
$\Phi_J(\ol{\F})=0$; hence \autoref{thm:Yos}(i) again gives
$(\piabXgeoram)^{(l)}=0$. Thus the $l$-primary part of
$\Ker((\d_X)_{\red})$ is trivial, and therefore
$\Ker(\dbar_{X,l})=0$.
\end{proof}

\section{Global fields}
\label{sec:global}
Throughout this section, let $X$ be a smooth projective curve over a
global field $F$ with $X(F)\neq\emptyset$.
Let $J = \Jac_X$ be the Jacobian variety of $X$ and put $g = \dim(J)$. 
If $\Char(F)=0$, we write
$P_\fin(F)$ and $P_\infty(F)$ for the sets of finite and archimedean places,
respectively. If $\Char(F)>0$, we put $P_\fin(F)=P(F)$.
We also use the following notation: 
\begin{itemize}
	\item $X_v:=X\otimes_FF_v$, the base change of $X$ to the completion
	$F_v$ at $v\in P(F)$;
	\item $\Sigma_\good(X) := \set{v \in P_\fin(F) | \mbox{$X$ has good reduction at $v$}}$, and
	\item $\Sigma_\bad(X) := P_\fin(F) \ssm \Sigma_\good(X)$. 
\end{itemize}

\subsection{Hasse principles}\label{subsec:Bloch}
Let $V(X)_\tor$ denote the torsion subgroup of $V(X)$. As noted in the
Introduction, Bloch's conjecture predicts that $V(X)=V(X)_\tor$. 
The local reciprocity map 
\begin{equation}
	\label{eq:tau_good}
\tau_{X_v}\colon V(X_v)_\red \hookrightarrow \piab(X_v)^{\geo} \simeq T(X_v)_{G_{F_v}}
\end{equation}
is injective and has finite image (\autoref{thm:BS}). 
Here, $T(X_v)$ is defined in \eqref{eq:TX}. 
\begin{prop}[{\cite[Section~5, Proposition~5]{KS83b}}]
\label{prop:KS}
\begin{enumerate}
	\item The group
	\[
	T(X)_{G_F}\simeq\piabXgeo
	\]
	is finite. Moreover, for almost all places $v\in P(F)$, the groups
	\[
	T(X)_{G_{F_v}}
	\simeq T(X_v)_{G_{F_v}}
	\simeq\piab(X_v)^\geo
	\]
	are finite. 
	\item Let $m$ be a nonzero integer which annihilates the finite group $T(X)_{G_F}$. 
	Then there is an exact sequence
	\[
	V(X) \xrightarrow{\loc} \bigoplus_{v\in P(F)} V(X_{v})/m \to T(X)_{G_F}\to 0.
	\]
\end{enumerate}
\end{prop}
Fix a prime $l\neq\Char(F)$.
The absolute Galois group $G_F = \Gal(F^{\sep}/F)$ acts on the $l$-torsion subgroup $J[l]$ of the Jacobian variety $J = \Jac_X$ of $X$.
Let
\[
\rhobar_{J,l}\colon G_F \to \Aut(J[l]) \simeq GL_{2g}(\Fl)
\]
be the mod-$l$ Galois representation of $J$, 
where the isomorphism on the right depends on the choice of an $\Fl$-basis of $J[l]$.
Let $G  = \Im(\rhobar_{J,l}) \subset GL_{2g}(\Fl)$. 
Following \cite[Chapter~I, Section~4]{MilADT}, 
for a finite $G$-module $M$, we define 
\[
\Sha^i(G,M) = \Ker\left(\loc^i\colon H^i(G, M)\to \prod_{v\in P(F)}H^i(G_v,M)\right), 
\]
where $G_v$ denotes the image in $G$ of a decomposition group of $G_F$ at  $v$.
\begin{lem}
	\label{thm:pdiv}
	Let $l$ be a prime with $l\neq\Char(F)$. 
	If $\Sha^1(G,J[l]^\vee)=0$, then 
	the natural map 
\begin{equation}
	\label{seq:Bp}
\locbar_{l}\colon V(X)/l \to \bigoplus_{v\in P(F)} V(X_v)/l
\end{equation}
is injective.
\end{lem}
\begin{proof}
The injective Galois symbol maps give a commutative square
\[
\xymatrix{
V(X)/l\ar[r]^-{\locbar_l}\ar@{^{(}->}[d]_{s_{X,l}}&
\displaystyle\prod_{v\in P(F)}V(X_v)/l\ar@{^{(}->}[d]^{\prod s_{X_v,l}}\\
H^2(F,J[l](1))\ar[r]^-{\loc_l^2}&
\displaystyle\prod_{v\in P(F)}H^2(F_v,J_v[l](1)).
}
\]
By global Tate duality, $\Ker(\loc_l^2)$ is Pontryagin dual to
\[
\Sha^1(F,J[l]^\vee):=
\Ker\left(H^1(F,J[l]^\vee)\to
\prod_{v\in P(F)}H^1(F_v,J_v[l]^\vee)\right).
\]
Put $K=F(J[l])$. Inflation--restriction, together with the injectivity of
\[
H^1(K,J[l]^\vee)\to\prod_{w\in P(K)}H^1(K_w,J[l]^\vee),
\]
which follows from Chebotarev since $G_K$ acts trivially on $J[l]^\vee$, gives an injection
\[
\Sha^1(F,J[l]^\vee)\hookrightarrow
\Sha^1(\Gal(K/F),J[l]^\vee)=\Sha^1(G,J[l]^\vee).
\]
The group on the right vanishes by assumption; hence $\loc_l^2$, and therefore $\locbar_l$, is injective. Finally, the image of $\loc_l^2$ lies in the direct sum of the local cohomology groups by \cite[Chapter~I, Lemma~4.8]{MilADT}, so the image of $\locbar_l$ lies in $\bigoplus_vV(X_v)/l$.
\end{proof}
\begin{prop}\label{prop:Sha}
\begin{enumerate}
	\item If $X = E$ is an elliptic curve, then $\Sha^1(G,E[l]^\vee) = 0$. 
	\item If the image of $\rhobar_{J,l}$ 
	is trivial, or 
	contains a matrix $\lambda I$ for some $\lambda \neq 1$ in $\Fl^\times$, 
	where $I$ is the identity matrix in $GL_{2g}(\Fl)$, 
	then $\Sha^1(G,J[l]^\vee)=0$.
\end{enumerate}
\end{prop}
\begin{proof}
(i) By Ramakrishnan's cohomological Hasse principle \cite[Proposition~1.2.1]{Ram25}, restriction
\[
H^1(G,E[l]^\vee)\to\prod_{C\subset G}H^1(C,E[l]^\vee)
\]
is injective, where $C$ runs through the cyclic subgroups of $G$. By Chebotarev, every cyclic subgroup occurs, up to conjugacy, as the decomposition group of an unramified place. Hence the localization map is injective and $\Sha^1(G,E[l]^\vee)=0$.

(ii) The assertion is clear if $G$ is trivial. If $G$ contains $\lambda I$ with $\lambda\ne1$, then the central element $\lambda I$ acts on $J[l]^\vee$ by $\lambda^{-1}$, and $\lambda^{-1}-1$ is invertible. Sah's lemma \cite[Lemma~A.2]{BR03} gives $H^1(G,J[l]^\vee)=0$, hence $\Sha^1(G,J[l]^\vee)=0$.
\end{proof}

\subsection{Main theorems}\label{subsec:main}
For $v\in P_\fin(F)$, we choose a proper regular model over $\mathcal O_v$
and denote its special fiber by $\Xbar_v$.
The local boundary maps induce
\[
\d_X\colon V(X)\xrightarrow{\loc}
\bigoplus_{v\in P_\fin(F)}V(X_v)
\xrightarrow{\oplus\d_{X_v}}
\bigoplus_{v\in P_\fin(F)}A_0(\Xbar_v).
\] 
For every prime $l$, define 
\begin{equation}
	\dbar_{X,l} \colon V(X)/l \to \bigoplus_{v\in P_\fin(F)}A_0(\Xbar_v)/l.
\end{equation}
Projecting onto the components corresponding to the good places gives
the following commutative diagram:
\begin{equation}\label{diag:dgood}
\vcenter{
\xymatrix{
V(X)/l \ar@{=}[d]\ar[r]^-{\dbar_{X,l}} & \ds\bigoplus_{v\in P_\fin(F)}A_0(\Xbar_v)/l \ar@{->>}[d]^{\mathrm{projection}} \\ 
V(X)/l \ar[r]^-{\dbar_{X,l}^\good} & \ds\bigoplus_{v\in \Sigma_\good(X)}\Jbar_v(\F_v)/l, 
}}
\end{equation}
where $J_v:=\Jac_{X_v}=J\otimes_FF_v$. 
The snake lemma gives the following exact sequence.
\begin{lem}\label{lem:ddg}
For every prime $l$, 
there is an exact sequence
\[	0\to \Ker(\dbar_{X,l})\to \Ker(\dbar_{X,l}^\good) \to \bigoplus_{v\in \Sigma_\bad(X)}A_0(\Xbar_v)/l
	\to \Coker(\dbar_{X,l})\to \Coker(\dbar_{X,l}^\good)\to 0.
\]
\end{lem}
Let $J[l]_{G_F}$ denote the maximal $G_F$-coinvariant quotient
\[
J[l]_{G_F}:=J[l]/I(J[l]),
\]
where 
$I(J[l])$ is the subgroup of $J[l]$ generated by elements of the form $\sigma P - P$ 
for $\sigma \in G_F$ and $P\in J[l]$. 
For an odd prime $l$, the kernel and cokernel of
$\dbar_{X,l}^\good$ are described as follows.
\begin{thm}\label{thm:main}
	Let $l$ be an odd prime with $l\neq \Char(F)$. 
	Define
	\[
 V(X,l):=\begin{cases}\ds \bigoplus_{v \in \Sigma_\good(X), v\mid l}\Ker(\dbar_{X_v,l}) \oplus  \bigoplus_{v \in \Sigma_\bad(X)}V(X_v)/l,  & \mbox{if $\Char(F) = 0$},\\
 	\ds \bigoplus_{v \in \Sigma_\bad(X)}V(X_v)/l, &   \mbox{if $\Char(F) >0$}.\\
 \end{cases}
\] 
Assume that the localization map
$\locbar_l\colon V(X)/l\to\bigoplus_{v\in P(F)}V(X_v)/l$ is injective. 
Then there is an exact sequence
\begin{equation}\label{ex:main}
	0 \to \Ker(\dbar_{X,l}^\good)\to V(X,l) \to J[l]_{G_F}\to \Coker(\dbar_{X,l}^\good)\to  0
\end{equation}
of finite-dimensional $\F_l$-vector spaces.
\end{thm}
\begin{proof}%[Proof of \autoref{thm:main}]
Apply Bloch's exact sequence in \autoref{prop:KS}(ii) 
with $m = l\cdot \#(T(X)_{G_F})$, which annihilates the finite group $T(X)_{G_F}$, 
and passing to the quotient modulo $l$, 
we obtain an exact sequence  
\begin{equation}\label{seq:Bp2} 
 V(X)/l \xrightarrow{\locbar_{l}}  \bigoplus_{v \in P(F)} V(X_v)/l  \to  T(X)_{G_F}/l \to 0.
\end{equation}
By assumption, $\locbar_l$ is injective. Since $T_l(X) \simeq T_l(J)$ and $T_l(J)/l\simeq J[l]$, there is a right exact sequence 
\[
T_l(J)_{G_F}\xrightarrow{l} T_l(J)_{G_F} \to J[l]_{G_F} \to 0 
\]
and hence
$T_l(J)_{G_F}/l\simeq J[l]_{G_F}$. 
Consider the following commutative diagram: 
\begin{equation}
\label{diag:locdbar}	
\vcenter{
\xymatrix{
0\ar[r] &  V(X)/l\ar@{=}[d] \ar[r]^-{\locbar_{l}} & \ds \bigoplus_{v \in P(F)} V(X_v)/l\ar[d]^{\oplus \dbar_{X_v,l}}  \ar[r] &  J[l]_{G_F} \ar[r]& 0,\\
&  V(X)/l\ar[r]^-{\dbar_{X,l}^\good} & \ds \bigoplus_{v\in \Sigma_\good(X)}\Jbar_{v}(\F_v)/l. & 
}}
\end{equation}
Since the local boundary map $\dbar_{X_v,l}$ is surjective for $v\in \Sigma_\good(X)$, 
the snake lemma gives the following exact sequence:
\begin{align*}
	0\to \Ker(\dbar_{X,l}^\good) &\to \bigoplus_{v\in \Sigma_\good(X)}\Ker(\dbar_{X_v,l}) \oplus  \bigoplus_{v\in \Sigma_\bad(X)\cup P_{\infty}(F)}V(X_v)/l\\ 
	&\to J[l]_{G_F}\to \Coker(\dbar_{X,l}^\good) \to  0.
\end{align*}
For each $v\in P(F)$, 
the Galois symbol map  
\[
s_{X_v,l}\colon V(X_v)/l \hookrightarrow H^2(F_v,J_v[l](1))
\]
is injective (cf.~\eqref{eq:sFl}) 
and the latter Galois cohomology group is finite. 
For every place $v\in \Sigma_\good(X)$, 
we have $\Ker(\dbar_{X_v,l}) = 0$ if $\Char(\F_v) \neq l$ (\autoref{lem:local0}). 
It remains to treat the archimedean places when $\Char(F)=0$. 
At a complex place $v$, 
$V(X_v)/l =0$ because $V(X_v) = V(X_\C)$ is $l$-divisible (\cite[Lemma~1.1]{Ras90}). 
At a real place $v$, the composition 
\[
V(X_{\R})/l \xrightarrow{\res_{\C/\R}} V(X_{\C})/l \xrightarrow{N_{\C/\R}}V(X_\R)/l
\]
is multiplication by $2 = [\C:\R]$,  which is an automorphism of
$V(X_\R)/l$ because $l$ is odd.
Since $V(X_\C)/l = 0$, this implies that $V(X_\R)/l = 0$. 
We obtain the exact sequence \eqref{ex:main}.
\end{proof}
\begin{cor}\label{cor:almostall}
Let $F$ be a global field, and let $X$ be a smooth projective curve over
$F$ with $X(F)\neq\emptyset$. Put $J=\Jac_X$. Assume that
$\End_{\overline F}(J)=\Z$ and that $J$ has semistable reduction of toric
dimension one at some finite place of $F$. Then
\[
\dbar_{X,l}^{\good}\colon V(X)/l
\to
\bigoplus_{v\in\Sigma_\good(X)}\Jbar_v(\F_v)/l
\]
is an isomorphism for all but finitely many primes
$l\neq\Char(F)$.
\end{cor}
\begin{proof}
By the big-monodromy theorem, the image of $\rhobar_{J,l}$ contains
$\Sp_{2g}(\Fl)$ for all but finitely many primes $l$
(\cite[Theorem~1]{Hal11}, \cite[Theorem~1.1]{AGW13}). For every sufficiently large prime $l$, the image contains $-I$. Hence \autoref{prop:Sha} and \autoref{thm:pdiv} show that
\[
\locbar_l\colon V(X)/l\to
\bigoplus_{v\in P(F)}V(X_v)/l
\]
is injective. Moreover, $-I-I=-2I$ is invertible on $J[l]$, so
$J[l]_{G_F}=0$.
It remains to show that $V(X,l)=0$ for all but finitely many $l$. For
each $v\in\Sigma_\bad(X)$, the reduced group $V(X_v)_\red$ is finite
by \autoref{thm:BS}. Since $V(X_v)_\div$ is divisible, we have
\[
V(X_v)/l\simeq V(X_v)_\red/l,
\]
and this group vanishes unless $l$ divides the order of
$V(X_v)_\red$. Since there are only finitely many bad places, all the
bad-reduction terms vanish for all but finitely many $l$.
If $\Char(F)>0$, this proves the assertion. Suppose that
$\Char(F)=0$. For a place $v\mid l$ at which $X$ has good reduction,
we have
$e_{F_v/\Q_l}\le [F:\Q]$.
Thus, for $l>[F:\Q]+1$, the inequality
$e_{F_v/\Q_l}<l-1$ holds, and \autoref{lem:local0}~(a) gives
$\Ker(\dbar_{X_v,l})=0$. Therefore $V(X,l)=0$ for all but finitely
many $l$ in the number field case as well. The exact sequence in
\autoref{thm:main} now shows that both the kernel and cokernel of
$\dbar_{X,l}^{\good}$ vanish.
\end{proof}

\subsection{The elliptic-curve case}\label{subsec:Gal}
We now specialize to the case in which $X=E$ is an elliptic curve over
$F$. 
The natural action of $G_F$ on $E[l]$ gives rise to a mod-$l$ Galois representation
\[
	\rhobar_{E,l}\colon G_F\to \Aut(E[l]) \simeq GL_2(\Fl). 
\]  
The isomorphism on the right depends on the choice of an $\Fl$-basis of
$E[l]$. 
\begin{lem}\label{lem:surj}
Let $l\neq\Char(F)$ be an odd prime. Assume that one of the following
conditions holds:
\begin{enumerate}[label=$(\alph*)$]
    \item There exists a basis of $E[l]$ such that the image of
    \[
        \rhobar_{E,l}\colon G_F\to
        \Aut(E[l])\simeq GL_2(\Fl)
    \]
    contains $SL_2(\Fl)$.
    \item There exists a finite place $v$ of $F$ such that $E$ has good
    reduction at $v$, $\Char(\F_v)\neq l$, and
    $l\nmid \#\Ebar_v(\F_v)$.
\end{enumerate}
Then $E[l]_{G_F}=0$.
\end{lem}
\begin{proof}
Under (a), the image contains $-I$, and therefore
\[
(-I-I)E[l]=-2E[l]=E[l]\subset I(E[l]),
\]
so $E[l]_{G_F}=0$. Under (b), let
$P_v(T)=T^2-a_vT+q_v$ be the characteristic polynomial of Frobenius at $v$. Since
\[
\det\bigl(I-\rhobar_{E,l}(\Frob_v)\bigr)
\equiv P_v(1)=\#\Ebar_v(\F_v)\not\equiv0\pmod l,
\]
the endomorphism $\rhobar_{E,l}(\Frob_v)-I$ of $E[l]$ is invertible. Its image is contained in $I(E[l])$, and hence $E[l]_{G_F}=0$.
\end{proof}
\begin{rem}\label{rem:rho}
If $\Char(F)=0$ and $E$ is non-CM, 
then the representation $\rhobar_{E,l}$ is surjective for almost all primes $l$ 
(\cite[Section~4.2, Theorem~2]{Ser72}). 
If $\Char(F)>0$ and $E$ is non-isotrivial, 
it is known that the image of $\rhobar_{E,l}$ contains $SL_2(\Fl)$ for almost all primes $l\neq \Char(F)$  (\cite[Proposition~3.12]{BLV09}).
More precisely, 
there exists a positive constant $c(F)$ depending only on the genus of
the base curve of $F$ 
such that 
$\Im(\rhobar_{E,l}) \supset SL_2(\Fl)$ 
for every non-isotrivial elliptic curve $E$ over $F$ and every prime $l\ge c(F)$ with $l\neq \Char(F)$ 
(\cite[Theorem~1.1]{CH05}). 
In particular, for the rational function field $F=\F(t)$, one can take $c(F) = 15$. 
\end{rem}
After choosing a basis of \(E[l]\), the Weil pairing gives
$\det\circ \rhobar_{E,l} = \chi_l$, where 
$\chi_l\colon G_F\to \Fl^{\times}$ is the mod-$l$ cyclotomic character.
An elliptic curve $E$ admits an $F$-rational isogeny of degree $l$ 
if and only if the image $\rhobar_{E,l}(G_F)$ is contained in a Borel subgroup $\pmat{\ast & \ast \\ 0 & \ast } \subset GL_2(\Fl)$.
Indeed, if there is an $F$-isogeny $\phi\colon E\to E'$ of degree $l$, then $\Ker(\phi)\subset E[l]$ 
is a $G_F$-stable subgroup. 
A basis $\set{P,Q}$ with $0\neq P \in \Ker(\phi)$ and $Q \in E[l]\ssm \Ker(\phi)$ puts the representation in upper-triangular form.
Conversely, if $\rhobar_{E,l}(G_F)$ is contained in a Borel, 
there exists a basis $\set{P,Q}$ of $E[l]$ such that 
$C:=\braket{P}$ is a $G_F$-stable subgroup. 
Then the quotient map  $\phi\colon E\to E/C=:E'$ is an isogeny with kernel $C$. 
We consider the following conditions:
\begin{enumerate}
	\item[($\mathrm{SC}_l)$] $\dimFl(E(F)[l]) = 1$, and $E$ has at least two $F$-isogenies of degree $l$.
	\item[($\mathrm{B}'_l)$] $\dimFl(E(F)[l]) = 1$, and $E$ has only one $F$-isogeny of degree $l$.
	\item[$(\mathrm{B}_l)$] $E(F)[l]=0$ and there exists an $F$-isogeny $\phi\colon E'\to E$ of degree $l$ with $E'(F)[l] \neq 0$.
\end{enumerate}
By \cite[Propositions~1.2 and~1.4]{RV01}, 
there exists a basis of $E[l]$ such that 
\begin{equation}
	\label{eq:Imrho}
\rhobar_{E,l}(G_F) = \begin{cases}
	\pmat{1 & \ast \\0 & \chi_l(G_F)}, & \mbox{if ($\mathrm{B}'_l$) holds},\\
	\pmat{1 & 0 \\ 0 & \chi_l(G_F)}, & \mbox{if ($\mathrm{SC}_l$) holds},\\
	\pmat{\chi_l(G_F) & \ast \\ 0 & 1}, & \mbox{if ($\mathrm{B}_l$) holds}.
\end{cases}
\end{equation}
\begin{lem}\label{lem:quot}
Let $l$ be a prime with $l\neq \Char(F)$. 
\begin{enumerate}
	\item 
Assume $\mu_l\not\subset F$. 
Then 
\[
\dim_{\Fl}(E[l]_{G_F})= \begin{cases} 
0,& \mbox{if $(\mathrm{B}'_l)$ holds},\\ 
1,& \mbox{if $(\mathrm{SC}_l)$ or $(\mathrm{B}_l)$ holds},\\
2,& \mbox{if $E[l]\subset E(F)$}.
\end{cases}
\]
\item 
Assume $\mu_l\subset F$. 
Then 
\[
\dim_{\Fl}(E[l]_{G_F})= \begin{cases} 
1,& \mbox{if $(\mathrm{B}'_l)$,  or $(\mathrm{B}_l)$ holds},\\
2,& \mbox{if $(\mathrm{SC}_l)$ holds, or $E[l]\subset E(F)$}.
\end{cases}
\]
\end{enumerate}
\end{lem}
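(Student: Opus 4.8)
The plan is to reduce the whole statement to a single linear‑algebra computation driven by the explicit image descriptions in \eqref{eq:Imrho}. Writing $V := E[l]$ and $H := \rho_{E,l}(G_F)\subseteq GL_2(\Fl)$, the definition $E[l]_{G_F}=V/I(E[l])$ recalled just before the lemma gives $I(E[l])=\sum_{\sigma\in G_F}\Im\bigl(\rho_{E,l}(\sigma)-\Id\bigr)$, the augmentation submodule, so that $\dimFl E[l]_{G_F}=2-\dimFl I(E[l])$. Thus everything amounts to reading off $\dimFl I(E[l])$ from the shape of $H$, and the only global input is the value of the cyclotomic character.

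First I would pin down $\chi_l(G_F)$. Since $\mu_l\subset\ol{\F}\subset F^{\sep}$, the character $\chi_l$ factors through $G_F\twoheadrightarrow\Gal(\F(\mu_l)/\F)$, on which the Frobenius acts by $\zeta\mapsto\zeta^{\#\F}$; hence $\chi_l(G_F)=\langle\,\#\F\bmod l\,\rangle\subseteq(\Z/l)^{\times}$, which is trivial precisely when $l\mid(\#\F-1)$. This is exactly the dichotomy between (ii) and (i), and because $\det\rho_{E,l}=\chi_l$ by the Weil pairing, the block written $\chi_l(G_F)$ in each normal form of \eqref{eq:Imrho} degenerates to $\{1\}$ exactly in case (ii). I would record this first.

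The rest is a case check against \eqref{eq:Imrho}. If $E[l]\subset E(F)$ the action is trivial, $I(E[l])=0$, and the dimension is $2$ in both parts. If $(\mathrm{SC}_l)$ holds the image is diagonal $\pmat{1 & 0 \\ 0 & \chi_l(G_F)}$, so every $\Im(\rho(\sigma)-\Id)$ lies on the $\chi_l$‑eigenline and equals it as soon as $\chi_l\neq1$; hence $\dimFl I=1$ and the coinvariants have dimension $1$ in part (i). If $(\mathrm{B}_l)$ holds the image is $\pmat{\chi_l(G_F) & \ast \\ 0 & 1}$, whose elements $h$ satisfy $\Im(h-\Id)\subseteq\langle e_1\rangle$ with equality once some diagonal entry $\neq1$, again giving coinvariant dimension $1$ in part (i). For $(\mathrm{B}'_l)$, where the image is $\pmat{1 & \ast \\ 0 & \chi_l(G_F)}$, I compute $\Im(\rho(\sigma)-\Id)=\langle(\,b(\sigma),\chi_l(\sigma)-1\,)^{T}\rangle$ and distinguish: when $\chi_l=1$ the matrices are unipotent with $\ast\neq0$ (else $E[l]\subset E(F)$, against $\dimFl E(F)[l]=1$), so $I=\langle e_1\rangle$ and the dimension is $1$ in part (ii); when $\chi_l\neq1$ I claim $I=V$, giving dimension $0$ in part (i).

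The one point that genuinely needs justification — and the main (if modest) obstacle — is this last claim for $(\mathrm{B}'_l)$ in part (i): one must see that the vectors $(b(\sigma),\chi_l(\sigma)-1)^{T}$ span $V$ rather than lying on a line. The clean reformulation is that they all become proportional if and only if there is an $\alpha$ with $b(\sigma)=\alpha(\chi_l(\sigma)-1)$ for all $\sigma$, i.e.\ if and only if $\alpha e_1+e_2$ is a second eigenvector, i.e.\ if and only if $E[l]$ has a second $G_F$‑stable line. Since $(\mathrm{B}'_l)$ posits exactly one $F$‑isogeny of degree $l$, hence exactly one stable line, the extension is non‑split, the proportionality fails, and $I=V$; this is precisely what separates $(\mathrm{B}'_l)$ from the split case $(\mathrm{SC}_l)$. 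Finally I would note the degeneracies forcing the grouping in part (ii): when $\chi_l=1$ the diagonal form of $(\mathrm{SC}_l)$ is the identity (so $E[l]\subset E(F)$, excluded) and the form of $(\mathrm{B}_l)$ is unipotent with fixed vector $e_1$ (so $E(F)[l]\neq0$, excluded), whence in part (ii) only $(\mathrm{B}'_l)$ actually occurs and the common value $1$ is justified.
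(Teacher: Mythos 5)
Your proof is correct, and it is in substance the paper's argument run on the other side of the duality: the paper computes $\dim_{\Fl}(E[l]_{G_F})$ as $\dim_{\Fl}\bigl((E[l]^{\vee})^{G_F}\bigr)$ via the contragredient action on the dual basis (citing \cite[Chap.~II, Thm.~2.6.9]{NSW08}), whereas you compute the coinvariants directly through the augmentation submodule $I(E[l])=\sum_{\sigma}\Im\bigl(\rho_{E,l}(\sigma)-\Id\bigr)$; both reduce to the same case check against \eqref{eq:Imrho}, and your case computations match the paper's up to dualization. Two places where your write-up is more explicit than the paper's: first, in case $(\mathrm{B}'_l)$ with $\chi_l\neq 1$ the paper simply asserts $(E[l]^{\vee})^{G_F}=0$, which as written relies on reading \eqref{eq:Imrho} as an equality of groups (so that the image contains a nontrivial unipotent element, i.e.\ the extension $0\to\mu_l\text{-type line}\to E[l]\to E[l]/\langle P\rangle\to 0$ is non-split); your derivation of exactly this non-splitness from the hypothesis that $E$ admits only one $F$-isogeny of degree $l$ is the correct source of the fact and does not presuppose the full-image normal form. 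Second, your observation that under $l\mid(\#\F-1)$ the conditions $(\mathrm{SC}_l)$ and $(\mathrm{B}_l)$ are vacuous (trivial $\chi_l$ would force $E[l]\subset E(F)$, resp.\ $E(F)[l]\neq 0$) is only implicit in the paper — its $(\mathrm{SC}_l)$ case notes that nontriviality of $\rho_{E,l}$ forces $\chi_l\neq 1$ — so your version makes the logical structure of part (ii) transparent, at no cost to the argument.
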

\begin{proof}
If $E[l]\subset E(F)$, then the representation is trivial and $E[l]_{G_F}=E[l]$. Assume now $\dim_{\F_l}E(F)[l]\le1$. Since
\[
(E[l]_{G_F})^\vee\simeq(E[l]^\vee)^{G_F},
\]
it is enough to compute the invariant subspace of the contragredient representations associated with \eqref{eq:Imrho}. In case $(\mathrm{SC}_l)$, this invariant space has dimension $1$ when $\chi_l$ is nontrivial and dimension $2$ when $\chi_l$ is trivial. In case $(\mathrm{B}_l)$, it is generated by the functional dual to the second basis vector and has dimension $1$. In case $(\mathrm{B}'_l)$, it has dimension $1$ if $\chi_l$ is trivial and is zero otherwise. These are exactly the dimensions stated above.
\end{proof}
\begin{rem}
	If $F$ is the function field of one variable over a finite field $\F$, 
	then the condition $\mu_l\subset F$ is equivalent to $l\mid (\#\F-1)$. 
\end{rem}

\section{Local computations for elliptic curves}\label{sec:local-elliptic}
Let us consider the case where $X=E$ is an elliptic curve over $K$. 
We take $\X = \E$ the minimal proper regular model of $E$ (\cite[Chapter~IV, Theorem~4.5]{Sil151}, \cite[Chapter~9, Definition 4.34]{Liu02}).

\subsection{Split multiplicative reduction}
Suppose that $E$ has split multiplicative reduction. There exists an
element $q(E)\in K^\times$, called the \textbf{Tate period}, with
$m:=v_K(q(E))>0$, and it induces an isomorphism 
$(K^{\sep})^{\times}/q(E)^{\Z} \xrightarrow{\simeq} E(K^{\sep})$ 
of $G_{K}$-modules (\cite[Chapter~V, Theorem~5.3]{Sil151}; see also \cite[Theorem~3.6]{BLV09} for the case $\Char(K)>0$). 
By \autoref{thm:BS}, the reduced part $V(E)_{\red} = V(E)/V(E)_\div$ is finite.  
Fix a uniformizer $\pi$ of $K$. For $x\in K^\times$, let
$\ol u(x)\in\F^\times$ be the image of
$u(x):=x\pi^{-v_K(x)}\in\O_K^\times$ under the reduction map
$\O_K^\times\twoheadrightarrow\F^\times$. 
The following lemma rewrites the local computation of
\cite[Proposition~2.1]{Hir22} in terms of the $j$-invariant of $E$. 
\begin{lem}
	\label{lem:local}
	Assume that $E$ has split multiplicative reduction. 
	Then for every prime $l\neq p := \Char(\F)$, we have
	\[
	\dim_{\Fl}(V(E)/l) = \begin{cases}
		1, & \mbox{if $l\mid (\#\F-1), l\mid v_K(j(E))$ and $\ol{u}(j(E)) \in (\F^{\times})^l$},\\
		0,& \mbox{otherwise}.
	\end{cases}
	\]
\end{lem}
\begin{proof}
The local Galois-symbol computation of \cite[Proposition~2.1]{Hir22} gives
\begin{equation}\label{eq:dimVEl}
\dim_{\F_l}(V(E)/l)=
\begin{cases}
1,&\mbox{if $\mu_l\subset K$ and $q(E)\in(K^\times)^l$},\\
0,&\mbox{otherwise}.
\end{cases}
\end{equation}
Since $l\ne\Char(\F)$, the condition $\mu_l\subset K$ is equivalent to $l\mid(\#\F-1)$. It remains to express the second condition in terms of $j(E)$. The $q$-expansion
\[
j(E)=q(E)^{-1}+744+196884q(E)+\cdots
\]
implies $v_K(q(E))=-v_K(j(E))$ and $j(E)q(E)\in1+\m_K$. Put $n=\#\F-1$ and write, with respect to the fixed uniformizer $\pi$,
\[
j(E)=\zeta u\pi^{-m},\qquad q(E)=\xi v\pi^m,
\]
where $\zeta,\xi\in\mu_n$, $u,v\in1+\m_K$, and $m=v_K(q(E))$. The group $1+\m_K$ is $l$-divisible, and $j(E)q(E)\in1+\m_K$ shows that $\zeta\xi\in(\mu_n)^l$. Therefore
\[
q(E)\in(K^\times)^l
\Longleftrightarrow
l\mid m\ \text{and}\ \xi\in(\mu_n)^l
\Longleftrightarrow
l\mid v_K(j(E))\ \text{and}\ \ol u(j(E))\in(\F^\times)^l.
\]
Together with \eqref{eq:dimVEl}, this proves the assertion.
\end{proof}

\begin{rem}
	If $\Char(K) = 0$ and the extension $K/\Qp$ is abelian, then 
	by \cite[Theorem~1.2]{Asa06} we have 
	\[
	V(E)_\red \simeq \mu_M/(q(E),K^\times)_M,
	\]
	Here $(-,-)_M\colon K^\times\times K^\times \to \mu_M$ is the $M$-th Hilbert symbol, and
\[
M=\max\set{m\mid \mu_m\subset K}.
\]
\end{rem}
\begin{ex}\label{ex:q}
Consider the elliptic curve $E$ over $F= \F_5(t)$  
defined by the Legendre equation $y^2 = x(x-1)(x-t^2)$. 
It has split multiplicative reduction at $\p_0 = (t)$ 
with $p=5$ and its $j$-invariant is $j(E) = \dfrac{(t^4-t^2+1)^3}{t^4(t-1)^2(t+1)^2}$.  
For the elliptic curve $E_0 := E\otimes_F F_{\p_0}$ over the local field $F_{\p_0}$, 
$v_{\p_0}(j(E)) = -4$ and $\ubar(j(E)) = 1$ in $\F_5$. 
Among the primes $l\neq5$, only $l=2$ satisfies all three conditions
$l\mid(\#\F_5-1)$, $l\mid v_{\p_0}(j(E))$, and
$\ubar(j(E))\in(\F_5^\times)^l$. 
We obtain $V(E_0)/2 \simeq \Z/2$ and $V(E_0)/l = 0$ for $l\not\in \set{2,5}$ 
by \autoref{lem:local}. 
\end{ex}
\begin{lem}\label{lem:localA}
If $E$ has split multiplicative reduction, then 
$A_0(\Ebar) = 0$.	
\end{lem}
\begin{proof}
Put $Y=\Ebar$. The special fiber is of Kodaira type $I_n$, where
$n=-v_K(j(E))$ (\cite[Chapter~VII, Theorem~6.1]{Sil106}; cf.
\cite[Chapter~IV, Sections~8--9]{Sil151}). The node of an $I_1$ fiber
is split, and for $n>1$ the irreducible components form a cycle of
smooth rational curves whose intersection points are $\F$-rational.
Suppose first that $n=1$. The normalization
$\nu\colon\mathbb P^1_{\F}\to Y$ is an isomorphism away from the split
node, whose two preimages are $\F$-rational. Hence
$\nu_*\colon CH_0(\mathbb P^1_{\F})\to CH_0(Y)$ is surjective. Since
$CH_0(\mathbb P^1_{\F})\simeq\Z$ and $Y$ has an $\F$-rational point,
the degree map gives $CH_0(Y)\simeq\Z$.
Now suppose that $n>1$. Write
\[
Y=\bigcup_{i=1}^nY^{(i)},\qquad Y^{(i)}\simeq\mathbb P^1_{\F},
\]
and label the intersection points cyclically by $y_1,\ldots,y_n$ so
that $Y^{(i)}$ contains $y_i$ and $y_{i+1}$, with indices taken modulo
$n$. Since $CH_0(Y^{(i)})\simeq\Z$, the two rational points $y_i$ and
$y_{i+1}$ are rationally equivalent on $Y^{(i)}$, and hence in
$CH_0(Y)$. Thus all the classes $[y_i]$ coincide. If $y$ is any closed
point of $Y$, choose a component $Y^{(i)}$ containing it. Again using
$CH_0(Y^{(i)})\simeq\Z$, we obtain
\[
[y]=[\F(y):\F][y_i]=[\F(y):\F][y_1]
\]
in $CH_0(Y)$. Therefore the degree map is an isomorphism
$CH_0(Y)\simeq\Z$, and hence $A_0(Y)=0$.
\end{proof}

\subsection{Non-split multiplicative reduction}
We next suppose that $E$ has non-split multiplicative reduction. 
For the $j$-invariant $j(E)\in K^\times$, 
there exists a Tate period $q:= q(E)\in K^\times$ such that 
the Tate curve $E_{q}$ has the $j$-invariant $j(E_q) = j(E)$. 
There is an isomorphism $\psi\colon E\to E_q$ defined over a quadratic
extension $L/K$ (cf.~\cite[Chapter~V, Theorem~5.3]{Sil151}; see also \cite[Theorem~3.6]{BLV09}). 
Let $\epsilon:G_K\to \set{\pm 1}$ be the quadratic character associated with the quadratic extension 
$L/K$. 
For every prime $l$, the map $\psi$ induces an isomorphism of
$G_K$-modules
$E[l]\xrightarrow{\simeq}E_q[l]\otimes_{\Fl}\Fl(\epsilon)$. 
Recall that $\Fl(\epsilon)$ is $\Fl$ with $G_K$-action given by $\sigma(x) = \epsilon(\sigma)x$ for $x \in \Fl$. 
As in the proof of \autoref{lem:local}, 
there is a short exact sequence 
\[
0 \to \mu_l \to E_{q}[l]\to \Z/l\to 0,
\]
where $G_K$ acts on $\mu_l$ via the cyclotomic character $\chi_l\colon G_K\to \Fl^\times$. 
After fixing a basis of $E[l]$, the representation of $\sigma \in G_K$ on $E[l]$ is written by 
\begin{equation}
\label{eq:GKns}	
\rho(\sigma) = \begin{pmatrix}
	\epsilon(\sigma)\chi_l(\sigma) & \epsilon(\sigma)\kappa(\sigma) \\
	0 & \epsilon(\sigma)
\end{pmatrix},
\end{equation}
where $\kappa\colon G_K\to\Fl$ is the Kummer cocycle defined by $\sigma(\sqrt[l]{q}) = \zeta^{\kappa(\sigma)}\sqrt[l]{q}$ 
for a  primitive $l$-th root of unity $\zeta$. 
\begin{prop}\label{prop:ns}
	Suppose that $E$ has non-split multiplicative reduction, and let $l$
	be an odd prime with $l\neq p$. 
	If either $l\nmid (\#\F+1)$ or  $l\nmid v_K(j(E))$, then $V(E)/l = 0$. 
\end{prop}
\begin{proof}
The Galois symbol map
\[
V(E)/l\hookrightarrow H^2(K,E[l](1))
\]
is injective by \eqref{eq:sFl}. By local Tate duality,
\[
\dimFl H^2(K,E[l](1))
=
\dimFl (E[l]^\vee)^{G_K}.
\]
It is therefore enough to prove that $(E[l]^\vee)^{G_K}=0$.
Write $f=a\phi_1+b\phi_2\in E[l]^\vee$ with respect to the dual
basis corresponding to \eqref{eq:GKns}. The condition that $f$ is
$G_K$-invariant is equivalent to
$f\rho(\sigma)=f
\ (\sigma\in G_K)$.
Using \eqref{eq:GKns}, we obtain
\begin{equation}\label{eq:ns-invariant}
 a\epsilon(\sigma)\chi_l(\sigma)=a,
 \qquad
 \epsilon(\sigma)\bigl(a\kappa(\sigma)+b\bigr)=b
 \quad (\sigma\in G_K).
\end{equation}
If $a=0$, then the second equality and the nontriviality of $\epsilon$
imply $b=0$. Thus a nonzero invariant vector would necessarily satisfy
$a\neq0$, and the first equality would imply
$\chi_l=\epsilon$.
The quadratic extension over which $E$ becomes a Tate curve is
unramified; hence $\epsilon(\Frob_K)=-1$. Since
$\chi_l(\Frob_K)=\#\F$ modulo $l$, this equality is impossible when
$l\nmid(\#\F+1)$.
Now assume that $l\nmid v_K(j(E))$. Since
$v_K(q(E))=-v_K(j(E))$, the valuation of $q(E)$ is not divisible by
$l$. Consequently, the restriction of the Kummer cocycle $\kappa$ to
the inertia subgroup $I_K$ is nonzero. Choose $\tau\in I_K$ with
$\kappa(\tau)\neq0$. Because the quadratic character $\epsilon$ is
unramified and $l\neq p$, both $\epsilon$ and $\chi_l$ are trivial on
$I_K$. The second equality in \eqref{eq:ns-invariant}, applied to
$\tau$, gives
$a\kappa(\tau)=0$,
contrary to $a\neq0$. Hence $(E[l]^\vee)^{G_K}=0$, and therefore
$V(E)/l=0$.
\end{proof}
\begin{lem}
	Suppose that $E$ has non-split multiplicative reduction.
	If $l>2$, then $A_0(\Ebar)/l = 0$.
\end{lem}
\begin{proof}
	There exists an unramified quadratic extension $K'/K$ such that
	$E_{K'}$ has split multiplicative reduction (\cite[Appendix C, Theorem~14.1]{Sil106}). 
	The residue field extension $\F'/\F$ is a quadratic extension. 
	Since $\Spec(\O_{K'})\to \Spec(\O_K)$ is \'etale, 
	the base change $\E_{\O_{K'}} := \E \otimes_{\O_K}\O_{K'}$ is still regular 
	and the special fiber $\Ebar_{K'} := \E_{\O_{K'}}\otimes_{\O_{K'}}\F'$  
	coincides with the base change $\Ebar\otimes_\F\F'$ of  $\Ebar$ to $\F'$. 
	By \autoref{lem:localA} applied over $\F'$, we have
	$CH_0(\Ebar_{\F'})/l\simeq \Fl$. 
	The natural maps 
	\[
	i\colon CH_0(\Ebar) \to CH_0(\Ebar_{\F'}),\quad N\colon CH_0(\Ebar_{\F'}) \to CH_0(\Ebar)
	\]
	satisfy 
	$N\circ i = 2$ (the multiplication by $2$). 
	Since $N\circ i=2$ and $l>2$, the norm map induces a surjection
	\[
	\Fl\simeq CH_0(\Ebar_{\F'})/l\twoheadrightarrow CH_0(\Ebar)/l.
	\]
	Since $\Ebar(\F)\neq \emptyset$,  
	we have a decomposition $CH_0(\Ebar) = \Z\oplus A_0(\Ebar)$ and 
	thus $CH_0(\Ebar)/l = \Fl$ and $A_0(\Ebar)/l = 0$. 
\end{proof}

\section{Computations}\label{sec:Q}
We conclude with one example over a number field and one over a global function field.
\begin{ex}\label{ex:11a}
Consider the degree-$5$ isogeny class of conductor $11$. Let
$E^{(1)},E^{(2)},E^{(3)}$ be the curves with Cremona labels 11a1, 11a2,
and 11a3, respectively. Their Mordell--Weil groups are
\[
E^{(1)}(\Q)\simeq\Z/5,\qquad E^{(2)}(\Q)=0,\qquad E^{(3)}(\Q)\simeq\Z/5,
\]
and all three curves have split multiplicative reduction at $11$. For every odd prime $l\ne5$, the mod-$l$ representation is surjective. At $l=5$, the conditions $(\mathrm{SC}_5)$, $(\mathrm{B}_5)$, and $(\mathrm{B}'_5)$ hold for $E^{(1)},E^{(2)}$, and $E^{(3)}$, respectively. These facts are recorded in \cite[Section~4]{Hir25}.
The local criterion of \autoref{lem:local}, the exact sequence of \autoref{thm:main}, and \cite[Lemma~4.1]{Hir25} give
\[
\begin{array}{c|c|c}
E&l&\text{boundary map }\dbar_{E,l}^{\good}\\ \hline
E^{(1)}&\text{every odd }l&\text{isomorphism}\\
E^{(2)}&l\ne5&\text{isomorphism}\\
E^{(2)}&l=5&\text{injective with cokernel }\F_5\\
E^{(3)}&\text{every odd }l&\text{isomorphism}.
\end{array}
\]
Indeed, at $l=5$ one has $V(E^{(1)}_{11})/5\simeq\F_5\simeq E^{(1)}[5]_{G_\Q}$, whereas $V(E^{(2)}_{11})/5=V(E^{(3)}_{11})/5=0$; for all other odd primes, the local terms and the coinvariant quotient vanish.
\end{ex}
\begin{ex}\label{ex:rat}
Let $F=\F_5(t)$ and
\[
E\colon y^2=x(x-1)(x-t^2).
\]
Then $E(F)_\tor\simeq(\Z/2)^2$ by \cite[Section~2]{McD18}, and
\[
\Delta(E)=t^4(t-1)^2(t+1)^2,\qquad
j(E)=\frac{(t^4-t^2+1)^3}{t^4(t-1)^2(t+1)^2}.
\]
The bad places are $(t),(t-1),(t+1)$, and $\infty$. The usual criterion involving $-c_4/c_6$ (after the change of variables $x=s^{-2}x'$, $y=s^{-3}y'$ at $\infty$, with $s=1/t$) shows that the reduction is split multiplicative at all four places. Let $l\ne5$ be odd. At the good place $v=(t-2)$, the reduction has $8$ rational points, so \autoref{lem:surj}(b) gives $E[l]_{G_F}=0$. Moreover, every bad residue field is $\F_5$, and $l\nmid4$; hence \autoref{lem:local} gives $V(E_v)/l=0$ at every bad place. Thus $V(E,l)=0$, and \autoref{thm:main} yields an isomorphism
\[
\dbar_{E,l}^{\good}\colon V(E)/l\xrightarrow{\simeq}
\bigoplus_{v\in\Sigma_{\good}(E)}\Ebar_v(\F_v)/l.
\]
Since $E$ has split multiplicative reduction at every bad place,
\autoref{lem:localA} gives $A_0(\Ebar_v)=0$ for
$v\in\Sigma_{\bad}(E)$. Therefore, this is equivalent to the
isomorphism
\[
\dbar_{E,l}\colon V(E)/l\xrightarrow{\simeq}
\bigoplus_{v\in P(F)}A_0(\Ebar_v)/l
\]
for every odd prime $l\ne5$.
\end{ex}

\def\cprime{$'$}
\providecommand{\bysame}{\leavevmode\hbox to3em{\hrulefill}\thinspace}
\providecommand{\MR}{\relax\ifhmode\unskip\space\fi MR }
\providecommand{\MRhref}[2]{
  \href{http://www.ams.org/mathscinet-getitem?mr=#1}{#2}
}
\providecommand{\href}[2]{#2}

\end{document}